\newtheorem{theorem}{Theorem}[section]
\newtheorem{lemma}[theorem]{Lemma}
\newcommand{\field}[1]{\ensuremath{\mathbf{#1}}}
\newcommand{\R}{\field{R}}
\newcommand{\C}{\field{C}}
\newcommand{\e}{\mathrm{e}}
\newcommand{\dd}{\ensuremath{\mathrm{d}}}
\newcommand\mnew{m_{\text{new}}}
\newcommand\mave{m_{\text{ave}}}
\newcommand\taunew{\tau_{\text{new}}}
\newcommand\tend{t_{\text{end}}}
\newcommand{\matlab}{\textsc{matlab}}
\newcommand{\mathematica}{\textsc{mathematica}}
\newcommand{\expokit}{\textsc{expokit}}
\newcommand{\expv}{\texttt{expv}}
\newcommand{\expmv}{\texttt{expmv}}
\newcommand{\phiv}{\texttt{phiv}}
\newcommand{\phip}{\texttt{phip}}
\newcommand{\phipm}{\texttt{phipm}}
\newcommand{\expm}{\texttt{expm}}
\newcommand{\odeofs}{\texttt{ode15s}}
\begin{document}

\title{A Krylov subspace algorithm for evaluating the
  $\varphi$-functions appearing in exponential integrators} 
\author{Jitse Niesen}
\address{School of Mathematics \\ 
         University of Leeds \\
         Leeds, LS2 9JT \\ 
         United Kingdom}
\email{jitse@maths.leeds.ac.uk}
\thanks{JN was supported by Australian Research Council grant DP0559083.}
\author{Will M. Wright}
\address{Department of Mathematics and Statistics \\
         La Trobe University \\
         3086 Victoria \\
         Australia}
\email{w.wright@latrobe.edu.au}
\date{11 November 2010}

\maketitle
\markboth{Jitse Niesen and Will M. Wright}{A Krylov subspace algorithm
  for evaluating the $\varphi$-functions}

\begin{abstract}
  We develop an algorithm for computing the solution of a large system
  of linear ordinary differential equations (ODEs) with polynomial
  inhomogeneity. This is equivalent to computing the action of a
  certain matrix function on the vector representing the initial
  condition. The matrix function is a linear combination of the matrix
  exponential and other functions related to the exponential (the
  so-called $\varphi$-functions). Such computations are the major
  computational burden in the implementation of exponential
  integrators, which can solve general ODEs. Our approach is to
  compute the action of the matrix function by constructing a Krylov
  subspace using Arnoldi or Lanczos iteration and projecting the
  function on this subspace. This is combined with time-stepping to
  prevent the Krylov subspace from growing too large. The algorithm is
  fully adaptive: it varies both the size of the time steps and the
  dimension of the Krylov subspace to reach the required accuracy. We
  implement this algorithm in the \matlab\ function \phipm\ and we
  give instructions on how to obtain and use this function. Various
  numerical experiments show that the \phipm\ function is often
  significantly more efficient than the state-of-the-art.
\end{abstract}


\section{Introduction}
\label{sect:intro}

In recent years there has been a resurgence of interest in a class of
numerical methods for the solution of ordinary differential equations
(ODEs) known as exponential integrators. These are intended to be used
on ODEs which can be split into a stiff linear part and a non-stiff
nonlinear part. This splitting can be done once or several times, as
need be. As their name suggests, exponential integrators use the
matrix exponential and various related matrix functions, generally
referred to as $\varphi$-functions, within the numerical integrator.
The computational cost of exponential integrators is dominated by the
need to evaluate these $\varphi$-functions, and this task is the
subject of this paper. For a recent review of exponential integrators,
we refer the reader to \citeN{minchev05aro}.

ODEs of the form exploited by exponential integrators often arise when
semi-discretizing a partial differential equation. Typically, the
matrix appearing in the linear part is large and sparse. For such
matrices, Krylov subspace methods provide an extremely efficient means
of evaluating the action of an arbitrary matrix function on a vector,
without the need to evaluate the matrix function itself. The use of
Krylov subspace approximations for the action of matrix exponential
was pioneered by several authors in the late eighties and early
nineties, notably \citeN{friesner89amf} and \citeN{gallopoulos92eso}.
\citeN{hochbruck97oks} show that for particular classes of matrices,
the convergence of the action of the matrix exponential on a vector is
faster than that for the solution of the corresponding linear system.
This suggests that exponential integrators can be faster than implicit
methods, because computing the action of the matrix exponential and
solving linear systems are the two fundamental operations for
exponential integrators and implicit methods, respectively. This
analysis breathed life back in the class of exponential integrators
invented in the early sixties, which had been abandoned in the eighties
due to their excessive computational expense.

In their well-known paper on computing the matrix exponential,
\citeN{moler03ndw} write: ``The most extensive software for computing
the matrix exponential that we are aware of is \expokit.''  This
refers to the work of \citeN{sidje98esp}, who wrote software for the
computation of the matrix exponential of both small dense and large
sparse matrices. The software uses a Krylov subspace approach in the
large sparse setting. Computing the action of the matrix exponential
is equivalent to solving a linear ODE, and Sidje uses this equivalence
to apply time-stepping ideas from numerical ODE methods in \expokit.
The time-steps are chosen with the help of an error estimate due to
\citeN{saad92aos}.  \citeN{sidje98esp} extends this approach to the
computation of the first $\varphi$-function, which appears in the
solutions of linear ODEs with constant inhomogeneity. This was further
generalized by \citeN{sofroniou07eco} to polynomial inhomogeneities
(or, from another point of view, to general $\varphi$-functions);
their work is included in the latest version of \mathematica.

In all the approaches described above the size of the Krylov subspace
is fixed, generally to thirty. \citeN{hochbruck98eif} explain in their
landmark paper one approach to adapt the size of the Krylov subspace.
In this paper, we develop a solver which combines the time-stepping
ideas of \citeN{sidje98esp} and \citeN{sofroniou07eco} with the
adaptivity of the dimension of the Krylov subspace as described by
\citeN{hochbruck98eif}.  We do not examine the case of small dense
matrices; this has been studied by \citeN{koikari07aea} and
\citeN{skaflestad09tsa}. This algorithm described in this paper can be
used as a kernel for the efficient implementation of certain classes
of exponential integrators. We also recommend the recent book by
\citeN{higham08fom}, which discusses many of the issues related to
matrix functions and their computation.

The approach followed in this paper, reducing large matrices to
smaller ones by projecting them on Krylov subspace, is not the only
game in town. Other possibilities are restricted-denominator rational
Krylov methods \cite{moret07ork}, the real Leja point method
\cite{caliari09ioe}, quadrature formulas based on numerical inversion
of sectorial Laplace transforms \cite{lopez-fernandez10aqb}, and
contour integration \cite{schmelzer07emf}. These methods are outside
the scope of this paper, but we intend to study and compare them in
future work.

The outline of this paper follows. In Section~\ref{sect:phi} we
present several useful results regarding the $\varphi$-functions. The
algorithm we have developed is explained in Section~\ref{sect:krylov},
where we present the Krylov subspace method, show how error
estimation, time-stepping and adaptivity are handled in our algorithm,
and finally give some instructions on how to use our implementation.
Several numerical experiments are given in Section~\ref{sect:numer}
followed by some concluding remarks and pointers towards future work
in Section~\ref{sect:concl}.


\section{The $\varphi$-functions}
\label{sect:phi}

Central to the implementation of exponential integrators is the
efficient and accurate evaluation of the matrix exponential and other
$\varphi$-functions. These $\varphi$-functions are defined for scalar
arguments by the integral representation
\begin{equation}
  \label{eq:phifunctions}
  \varphi_0(z) = \e^z, \quad \varphi_{\ell}(z) = \frac{1}{({\ell}-1)!}
  \int_0^1 \e^{(1- \theta)z} \theta^{\ell-1}\, \dd\theta, \qquad \ell
  = 1, 2, \dots, \, z \in \C.
\end{equation}
For small values of $\ell$, these functions are
\begin{equation*}
  \varphi_1(z) = \frac{\e^z - 1}{z}, \quad
  \varphi_2(z) = \frac{\e^z - 1 - z}{z^2}, \quad
  \varphi_3(z) = \frac{\e^z - 1 - z - \frac{1}{2}z^2}{z^3}.
\end{equation*}
The $\varphi$-functions satisfy the recurrence relation
\begin{equation}
  \label{eq:phirelation}
  \varphi_\ell(z) = z \varphi_{\ell + 1}(z) + \frac{1}{\ell!}, \qquad
  \ell = 1, 2, \dots. 
\end{equation}
The definition can then be extended to matrices instead of scalars
using any of the available definitions of matrix functions, such as
that based on the Jordan canonical form \cite{horn91tim,higham08fom}.

Every stage in an exponential integrator can be expressed as a linear
combination of $\varphi$-functions acting on certain vectors:
\begin{equation}
  \label{eq:lincomb}
  \varphi_0(A) b_0 + \varphi_1(A) b_1 + \varphi_2(A) b_2 + \cdots +
  \varphi_p(A) b_p.
\end{equation}
Here $p$ is related to the order of the exponential integrator,
typically taking values less than five. $A$ is a matrix, often the
Jacobian for exponential Rosenbrock-type methods or an approximation
to it for methods based on the classical linear/non-linear splitting;
usually, $A$ is large and sparse.

We need to compute expressions of the form~\eqref{eq:lincomb} several
times in each step that the integrator takes, so there is a need to
evaluate these expressions efficiently and accurately. This is the
problem taken up in this paper. We would like to stress that any
procedure for evaluating~\eqref{eq:lincomb}, such as the one described
here, is independent of the specific exponential integrator
used and can thus be re-used in different integrators. The exponential
integrators differ in the vectors $b_0,\ldots,b_p$ appearing
in~\eqref{eq:lincomb}.

The following lemma 
gives a formula for the exact solution of linear differential
equations with polynomial inhomogeneity. This result partly explains
the important role that $\varphi$-functions play in exponential
integrators (see~\citeN{minchev05aro} for more details).  The lemma
also provides the background for the time-stepping procedure for the
evaluation of \eqref{eq:lincomb} which we develop in
\S\ref{sect:timestep}.

\begin{lemma}[(\citeN{skaflestad09tsa})]
  \label{lem:exactsolution}
  The solution of the non-auto\-nomous linear initial value problem 
  \begin{equation}
    \label{eq:nonautonomous}
    u'(t) = A u(t) + \sum_{j=0}^{p-1} \frac{t^{j}}{j!} b_{j+1}, 
    \qquad u(t_k) = u_k,
  \end{equation}
  is given by
  \begin{equation*}
    \label{eq:exactsolution}
    u(t_k+\tau_k) = \varphi_0(\tau_k A) u_k + 
    \sum_{j=0}^{p-1} \sum_{\ell=0}^{j}  \frac{t_k^{j-\ell}}{(j-\ell)!}
    \tau_k^{\ell+1} \varphi_{\ell+1}(\tau_kA) b_{j+1},
  \end{equation*}
  where the functions $\varphi_\ell$ are defined
  in~\eqref{eq:phifunctions}.
\end{lemma}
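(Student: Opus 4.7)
The plan is to obtain the formula by applying the variation of constants (Duhamel) formula to the initial value problem and then expanding the polynomial forcing in a form that matches the integral definition of the $\varphi$-functions.

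First, I would write the solution of~\eqref{eq:nonautonomous} in the standard form
\begin{equation*}
  u(t_k+\tau_k) = \e^{\tau_k A} u_k + \int_0^{\tau_k} \e^{(\tau_k - s)A}
  \sum_{j=0}^{p-1} \frac{(t_k+s)^j}{j!} b_{j+1} \,\dd s,
\end{equation*}
noting that the shift $s = t - t_k$ turns the inhomogeneity $t^j/j!$ into $(t_k+s)^j/j!$. The leading term is already $\varphi_0(\tau_k A) u_k$ by~\eqref{eq:phifunctions}, so only the integral remains to be rewritten.

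Next, I would apply the binomial theorem to $(t_k+s)^j$ to decouple the dependence on $t_k$ and $s$, giving
\begin{equation*}
  \frac{(t_k+s)^j}{j!} = \sum_{\ell=0}^{j} \frac{t_k^{j-\ell}}{(j-\ell)!}\,\frac{s^\ell}{\ell!}.
\end{equation*}
After exchanging the finite sum with the integral, the key calculation is to evaluate $\int_0^{\tau_k} \e^{(\tau_k-s)A} s^\ell\,\dd s$. The substitution $\theta = s/\tau_k$ converts this into $\tau_k^{\ell+1} \int_0^1 \e^{(1-\theta)\tau_k A} \theta^\ell\,\dd\theta$, which by the integral representation~\eqref{eq:phifunctions} equals $\tau_k^{\ell+1}\,\ell!\,\varphi_{\ell+1}(\tau_k A)$.

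Combining these ingredients, the $\ell!$ in the denominator from the binomial expansion cancels the $\ell!$ produced by the $\varphi$-integral, yielding
\begin{equation*}
  \sum_{j=0}^{p-1} \sum_{\ell=0}^{j} \frac{t_k^{j-\ell}}{(j-\ell)!} \tau_k^{\ell+1} \varphi_{\ell+1}(\tau_k A) b_{j+1},
\end{equation*}
which matches the claimed expression. The argument is essentially bookkeeping, so I do not expect a substantive obstacle; the only step requiring care is tracking the two factorials in the binomial expansion and ensuring they cancel correctly against the $(\ell-1)!$-type normalization in the definition of $\varphi_{\ell+1}$. Matrix-vector commutativity is not an issue since only a single matrix $A$ (and hence only functions of $A$) appears throughout.
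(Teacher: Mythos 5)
Your proposal is correct and follows essentially the same route as the paper's proof: variation of constants with the exponential integrating factor, binomial expansion of $(t_k+s)^j$, the substitution $s=\theta\tau_k$, and the integral representation~\eqref{eq:phifunctions} of $\varphi_{\ell+1}$, with the two $\ell!$ factors cancelling exactly as you describe.
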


\begin{proof}
  Recall that $\varphi_0$ denotes the matrix exponential.  Using
  $\varphi_0((t_k-t)A)$ as an integrating factor for
  \eqref{eq:nonautonomous} we arrive at
  \begin{equation*}
    \begin{split}
      u(t_k + \tau_k) 
      & = \varphi_0(\tau_k A) u_k + \varphi_0(\tau_k A) \int_0^{\tau_k}
      \varphi_0(-s A) \sum_{j=0}^{p-1} \frac{(t_k+\tau_k)^{j}}{j!}
      b_{j+1} \, \dd s \\
      & = \varphi_0(\tau_k A) u_k + \varphi_0(\tau_k A)
      \int_0^{\tau_k} \varphi_0(-s A) \sum_{j=0}^{p-1} \sum_{\ell=0}^{j}
      \frac{t_k^{j-\ell} s^\ell}{\ell!(j-\ell)!} b_{j+1} \, \dd s. \\
      \intertext{Now change the integration variable, $s=\theta \tau_k$,
        and apply the definition~\eqref{eq:phifunctions} of the
        $\varphi$-functions.} 
      & = \varphi_0(\tau_k A) u_k + \sum_{j=0}^{d-1} \sum_{\ell=0}^{j}
      \frac{t_k^{j-\ell}}{(j-\ell)!} \tau_k^{\ell+1}
      \left(\frac{1}{\ell!}\int_0^1 \varphi_0((1-\theta) \tau_k A)
        \theta^{\ell}\, \dd \theta \right) b_{j+1} \\
      & = \varphi_0(\tau_k A) u_k + \sum_{j=0}^{p-1} \sum_{\ell=0}^{j}
      \frac{t_k^{j-\ell}}{(j-\ell)!} \tau_k^{\ell+1}
      \varphi_{\ell+1}(\tau_k A) b_{j+1}. 
    \end{split}
  \end{equation*} 
\end{proof}


\section{The algorithm}
\label{sect:krylov}

This section describes the details of our algorithm for evaluating
expressions of the form~\eqref{eq:lincomb} as implemented in the
\matlab\ function \phipm. In the first part of this section we explain
how Krylov subspace techniques can be used to reduce large matrices to
small ones when evaluating matrix functions. An estimate of the error
committed in the Krylov subspace approximation is essential for an
adaptive solver; this is dealt with in the second part. Then we
discuss how to split up the computation of the $\varphi$-functions
into several steps. Part four concerns the possibility of adapting the
Krylov subspace dimension and the size of the steps using the error
estimate from the second part, and the interaction between both forms
of adaptivity. Finally, we explain how to use the implementation
provided in the \phipm\ function.

\subsection{The basic method}
\label{sect:basic}

We start by considering how to compute $\varphi_p(A)v$, where $A$ is
an $n \times n$ matrix (with $n$ large) and $v \in \R^n$. This is one
of the terms in~\eqref{eq:lincomb}. We will use a Krylov subspace
approach for this task.

The idea behind this Krylov subspace approach is quite simple. The
vector $\varphi_p(A)v$ lives in $\R^n$, which is a big space. We
approximate it in a smaller space of dimension~$m$. This smaller space
is the Krylov subspace, which is given by
\begin{equation*}
  K_m = \operatorname{span} \{v, Av, A^2v, \ldots, A^{m-1}v \}.
\end{equation*}
However, the vectors~$A^jv$ form a bad basis for the Krylov subspace
because they point in almost the same direction as the dominant
eigenvector of~$A$; thus, the basis vectors are almost linearly
dependent. In fact, computing these successive products is the power
iteration method for evaluating the dominant eigenvector. Therefore,
we apply the (stabilized) Gram--Schmidt procedure to get an
orthonormal basis of the Krylov subspace:
\begin{equation*}
  K_m = \operatorname{span} \{ v_1, v_2, \ldots, v_m \}.
\end{equation*}
Let $V_m$ denote the $n$-by-$m$ matrix whose columns are
$v_1,\dots,v_m$. Then the $m$-by-$m$ matrix $H_m = V_m^T A V_m$ is the
projection of the action of~$A$ to the Krylov subspace, expressed in
the basis $\{v_1,\dots,v_m\}$. The \emph{Arnoldi iteration}
(Algorithm~\ref{alg:arnoldi}) computes the matrices~$H_m$
and~$V_m$, see~\citeN{saad92aos}.

\begin{algorithm}
  \caption{The Arnoldi iteration.}
  \label{alg:arnoldi}
  \begin{algorithmic}[0]
    \STATE $v_1 = v / \|v\|$
    \FOR{$j = 1, \dots, m$}
    \STATE $w = Av_j$
    \FOR{$i = 1, \dots, j$}
    \STATE $h_{i,j} = v_i^T w; \, w = w - h_{i,j} v_i$ 
    \ENDFOR
    \STATE $h_{j+1,j} = \|w\|; \,  v_{j+1} = w / h_{j+1,j}$
    \ENDFOR
  \end{algorithmic}
\end{algorithm}

It costs $\tfrac32(m^2-m+1)n$ floating point operations (flops) and
$m$~products of $A$ with a vector to compute the matrices~$H_m$
and~$V_m$. The cost of one of these matrix-vector products depends on
the sparsity of~$A$; the straightforward approach uses $2N_A$ flops
where $N_A$ is the number of nonzero entries in the matrix~$A$.

The projection of the action of $A$ on the Krylov subspace~$K_m$ in
the standard basis of~$\R^n$ is $V_m H_m V_m^T$. We now approximate
$\varphi_p(A)v$ by $\varphi_p(V_m H_m V_m^T) v$. Since $V_m^TV_m=I_m$
and $V_mV_m^Tv=v$ we have $\varphi_p(V_m H_m V_m^T) v = V_m
\varphi_p(H_m) V_m^T v$.  Finally, $V_m^T v = \|v\| e_1$, where $e_1$
is the first vector in the standard basis.  Taking everything
together, we arrive at the approximation
\begin{equation}
  \label{eq:phipapprox}
  \varphi_p(A) v \approx \beta V_m \varphi_p(H_m) e_1, 
  \quad \beta = \|v\|. 
\end{equation}
The advantage of this formulation is that the matrix~$H_m$ has size
$m$-by-$m$ and thus it is much cheaper to evaluate $\varphi_p(H_m)$
than~$\varphi_p(A)$.

The matrix~$H_m$ is Hessenberg, meaning that the $(i,j)$~entry
vanishes whenever $i > j+1$. It is related to the matrix~$A$ by the
relation $H_m = V_m^T A V_m$. If $A$ is symmetric, then $H_m$ is both
symmetric and Hessenberg, which means that it is tridiagonal. In that
case, we denote the matrix by~$T_m$, and we only need traverse the
$i$-loop in the Arnoldi iteration twice: once for $i=j-1$ and once for
$i=j$. The resulting algorithm is known as the \emph{Lanczos
  iteration} (Algorithm~\ref{alg:lanczos});
see~\citeN[Alg~36.1]{trefethen97nla}. It takes $3(2m-1)n$ flops and
$m$~products of $A$ with a vector to compute~$T_m$ and~$V_m$ when $A$
is symmetric.

\begin{algorithm}
  \caption{The Lanczos iteration.}
  \label{alg:lanczos}
  \begin{algorithmic}[0]
    \STATE $t_{1,0} = 0; \, v_0 = 0; \, v_1 = v / \|v\|$
    \FOR{$j = 1, \dots, m$}
    \STATE $w = Av_j$; \, $t_{j,j} = v_j^Tw$ 
    \STATE $w = w - t_{j,j-1} v_{j-1} - t_{j,j} v_j$
    \STATE $t_{j,j-1} = \|w\|$; \, $t_{j-1,j} = \|w\|$
    \STATE $v_{j+1} = w/t_{j,j-1}$
    \ENDFOR
  \end{algorithmic}
\end{algorithm}

The Krylov subspace algorithm reduces the problem of computing
$\varphi_p(A)v$ where $A$ is a big $n$-by-$n$ matrix to that of
computing $\varphi_p(H_m)e_1$ where $H_m$ is a smaller $m$-by-$m$
matrix. \citeN{skaflestad09tsa} describe a modified
scaling-and-squaring method for the computation of $\varphi_p(H_m)$.
However, this method has the disadvantage that one generally also has
to compute $\varphi_0(H_m), \dots, \varphi_{p-1}(H_m)$. It is usually
cheaper to compute the matrix exponential $\exp(\hat{H}_m)$ of a
slightly larger matrix~$\hat{H}_m$, following an idea
of~\citeN[Prop.~2.1]{saad92aos}, generalized
by~\citeN[Thm.~1]{sidje98esp} to $p>1$. Indeed, if we define the
augmented matrix~$\hat{H}_m$ by
\begin{equation}
  \label{eq:hathm}
  \hat{H}_m = \begin{bmatrix}
    H_m & e_1 & 0 \makebox[0pt][l]{\hspace{3ex}$m$\text{ rows}} \\
    0 & 0 & I \makebox[0pt][l]{\hspace{3ex}$p-1$\text{ row}s} \\
    0 & 0 & 0 \makebox[0pt][l]{\hspace{3ex}$1$\text{ row}} \\
  \end{bmatrix}
\end{equation}
then the top $m$ entries of the last column of~$\exp(\hat{H}_m)$ yield
the vector $\varphi_p(H_m)e_1$. Finally, we compute the matrix
exponential~$\exp(\hat{H}_m)$ using the degree-13 diagonal Pad\'e
approximant combined with scaling and squaring as advocated by
\citeN{higham05tsa}; this is the method implemented in the function
\expm\ in \matlab\ Version~7.2 (R2006a) and later. In contrast,
\expokit\ uses the degree-14 uniform rational Chebyshev approximant
for symmetric negative-definite matrices and the degree-6 diagonal
Pad\'e approximant for general matrices, combined with scaling and
squaring. We choose not to deal with the negative-definite matrices
separately, but consider this as a possible extension to consider at a
later date. \citeN{koikari09obs} recently developed a new variant of
the Schur--Parlett algorithm using three-by-three blocking.

The computation of~$\exp(\hat{H}_m)$ using Higham's method requires
one matrix division (costing $\frac83(m+p)^3$ flops) and $6 + \lceil
\log_2(\|\hat{H}_m\|_1/5.37) \rceil_+$ matrix multiplications (costing
$2(m+p)^3$ flops each), where $\lceil x \rceil_+$ denotes the smallest
nonnegative integer larger than~$x$. Thus, the total cost of computing
the matrix exponential of~$\hat{H}_m$ is $M(\hat{H}_m)\,(m+p)^3$ where
\begin{equation}
  \label{eq:8}
  M(A) = \frac{44}3 + 2 \left\lceil \log_2 \frac{\|A\|_1}{5.37}
  \right\rceil_+. 
\end{equation}
Only the last column of the matrix exponential~$\exp(\hat{H}_m)$ is
needed; it is natural to ask if this can be exploited. Alternatively
one could ask whether the computation of~$\exp(\hat{H}_m)$ using the
scaling-and-squaring algorithm can be modified to take advantage of
the fact that $\hat{H}_m$ is Hessenberg
(\citeN[Prob.~13.6]{higham08fom} suggests this as a research problem).
The algorithm suggested by~\citeN[Alg.~2.3]{higham05tsa} computes the
matrix exponential to machine precision (Higham considers IEEE single,
double and quadruple precision). The choice of the degree of the
Pad\'e approximation and the number of scaling steps is intimately
connected with this choice of precision. However, we generally do not
require this accuracy. It would be interesting to see if significant
computational savings can be gained in computing the matrix
exponential by developing an algorithm with several other choices of
precision.

\subsection{Error estimation}
\label{sect:error}

\citeN[Thm.~5.1]{saad92aos} derives a formula for the error in the
Krylov subspace approximation~\eqref{eq:phipapprox} to~$\varphi_p(A)
v$ in the case $p=0$. This result was generalized
by~\citeN[Thm.~2]{sidje98esp} to $p>0$. Their result states that
\begin{equation}
  \label{eq:errorseries}
  \varphi_p(A) v - \beta V_m \varphi_p(H_m) e_1
  = \beta \sum_{j=p+1}^\infty h_{m+1,m} e_m^T \varphi_j(H_m) e_1
  A^{j-p-1} v_{m+1}.
\end{equation}
The first term in the series on the right-hand side does not involve
any multiplications with the matrix~$A$ and the Arnoldi iteration
already computes vector~$v_{m+1}$, so this term can be computed
without too much effort. We use this term as an error estimate for the
approximation~\eqref{eq:phipapprox}:
\begin{equation}
  \label{eq:errest}
  \varepsilon 
  = \| \beta h_{m+1,m} e_m^T \varphi_{p+1}(H_m) e_1 v_{m+1} \|
  = \beta |h_{m+1,m}| \, [\varphi_{p+1}(H_m)]_{m,1}.
\end{equation}
Further justification for this error estimate is given
by~\citeN{hochbruck98eif}.

We also use this error estimate as a corrector: instead
of~\eqref{eq:phipapprox}, we use
\begin{equation}
  \label{eq:phipapprox2}
  \varphi_p(A) v \approx \beta V_m \varphi_p(H_m) e_1
  + \beta h_{m+1,m} e_m^T \varphi_{p+1}(H_m) e_1 v_{m+1}.
\end{equation}
If $\varphi_{p+1}(H_m) e_1$ is computed using the augmented
matrix~\eqref{eq:hathm}, then $\varphi_p(H_m) e_1$ also appears in the
result, so we only need to exponentiate a matrix of size $m+p+1$.  The
approximation~\eqref{eq:phipapprox2} is more accurate,
but~$\varepsilon$ is no longer a real error estimate. This is
acceptable because, as explained below, it is not used as an error
estimate but only for the purpose of adaptivity.

\citeN{sidje98esp} proposes a more accurate error estimate which also
uses the second term of the series in~\eqref{eq:errorseries}.
However, the computation of this requires a matrix-vector product and
the additional accuracy is in our experience limited. We thus do not
use Sidje's error estimate.

\subsection{Time-stepping}
\label{sect:timestep}

\citeN[Thm.~4.7]{saad92aos} proves that the error of the
approximation~\eqref{eq:phipapprox} satisfies the bound
\begin{equation}
  \label{eq:1}
  \|\text{Error}\| \le \frac{2 \beta (\rho(A))^m}{m!} (1+o(1))
\end{equation}
for sufficiently large~$m$, where $\rho(A)$ denotes the spectral
radius of~$A$.  This bound deteriorates as $\rho(A)$ increases,
showing that the dimension~$m$ of the Krylov subspace has to be large
if $\rho(A)$ is large. \citeN{sidje98esp} proposes an alternative
method for computing $\varphi_1(A)v$ when $\rho(A)$ is large, based on
time-stepping. This approach is generalized by~\citeN{sofroniou07eco}
for general $\varphi$-functions.

The main idea behind this time-stepping procedure is that
$\varphi_p(A)v$ solves a non-autonomous linear ODE. More generally,
Lemma~\ref{lem:exactsolution} states that the function
\begin{equation}
  \label{eq:lincomb2}
  u(t) = \varphi_0(tA) b_0 + t \varphi_1(tA) b_1 + t^2 \varphi_2(tA)
  b_2 + \cdots + t^p \varphi_p(tA) b_p,
\end{equation} 
is the solution of the differential equation
\begin{equation}
  \label{eq:phide}
  u'(t) = A u(t) + b_1 + tb_2 + \cdots + \frac{t^{p-1}}{(p-1)!} b_p, \qquad
  u(0) = b_0. 
\end{equation}
We now use a time-stepping method to calculate~$u(\tend)$ for some
$\tend\in\R$. If we want to compute an expression of the
form~\eqref{eq:lincomb}, we set $\tend=1$.  Split the time
interval~$[0,\tend]$ by introducing a grid $0 = t_0 < t_1 < \ldots <
t_n = \tend$. To advance the solution, say from~$t_k$ to~$t_{k+1}$, we
need to solve the differential equation~\eqref{eq:phide} with the
value of~$u(t_k)$ as initial condition. The relation between~$u(t_k)$
and $u(t_{k+1})$ is given in Lemma~\ref{lem:exactsolution}.
Rearranging this expression gives
\begin{equation}
  \label{eq:steppre}
  u(t_{k+1}) = \varphi_0(\tau_k A) u(t_k) + \sum_{i=1}^p \tau_k^i
  \varphi_i(\tau_k A) \sum_{j=0}^{p-i} \frac{t_k^j}{j!} b_{i+j},
\end{equation}
where $\tau_k = t_{k+1}-t_k$. However, we do not need to evaluate all
the $\varphi$-functions. The recurrence relation $\varphi_{q}(A) =
\varphi_{q+1}(A)A + \frac{1}{q!}I$ implies that
\begin{equation*}
  \varphi_q(A) = \varphi_p(A) A^{p-q} + \sum_{j=0}^{p-q-1}
  \frac{1}{(q+j)!} A^j, \qquad q=0,1,\ldots,p-1.
\end{equation*}
Substituting this in~\eqref{eq:steppre} yields
\begin{equation}
  \label{eq:step}
  u(t_{k+1}) = \tau_k^p \varphi_p(\tau_k A) w_p 
  + \sum_{j=0}^{p-1} \frac{\tau_k^j}{j!} w_j,
\end{equation}
where the vectors~$w_j$ are given by 
\begin{equation*}
  w_j = A^j u(t_k) + \sum_{i=1}^j A^{j-i} \sum_{\ell=0}^{j-i}
  \frac{t_k^\ell}{\ell!} b_{i+\ell}, \qquad j=0,1,\ldots,p.
\end{equation*}
This is the time-stepping method for computing~\eqref{eq:lincomb}. 

The computational cost of this method is as follows. At every step, we
need to compute the vectors~$w_j$ for $j=0,\ldots,p$, the action
of~$\varphi_p(\tau_k A)$ on a vector, $p+1$~scalar multiplications of
a vector of length~$n$ and $p$~vector additions. The vectors~$w_j$
satisfy the recurrence relation
\begin{equation}
  \label{eq:wrecurrence}
  w_0 = u(t_k) \quad\text{and}\quad
  w_j = Aw_{j-1} + \sum_{\ell=0}^{p-j} \frac{t_k^\ell}{\ell!}
  b_{j+\ell}, \qquad j=1,\ldots,p,
\end{equation}
and hence their computation requires $p$ multiplications of $A$ with a
vector, $p$~scalar multiplications, and $p$~vector additions. 

One reason for developing this time-stepping method is to reduce the
dimension of the Krylov subspace. If the spectrum of $A$ is very
large, multiple time-steps may be required. We intend, in the future,
to compare this approach with the approach described by
\citeN{skaflestad09tsa} which evaluates the matrices $\varphi_0(H_m),
\ldots, \varphi_{p+1}(H_m)$ directly. The latter method may have
computational advantages, particularly if $b_0,\ldots,b_p$ are equal
or zero.  For values of $p$ that we are interested in, that is less
than five, we have not noticed any loss of accuracy using this
approach. We intend to look into this issue more thoroughly in future
investigations, especially in light of the paper \cite{almohy10cta},
which appeared during the review process, which noticed that for large
values of $p$ accuracy can be lost.

We choose an initial step size similar to the one suggested in
\expokit, except we increase the rather conservative estimate by an
order of magnitude, to give
\begin{equation}
  \label{eq:tau_0}
  \tau_0 = \frac{10}{\|A\|_\infty} \left(\frac{ \mathrm{Tol} \, \bigl( (\mave
          + 1) / \e \bigr)^{\mave + 1} \sqrt{2\pi(\mave + 1)}}{4
      \|A\|_\infty \|b_0\|_\infty} \right)^{1 / \mave},  
\end{equation}
where Tol is the user defined tolerance and $\mave$ is the
average of the input and maximum allowed size of the Krylov subspace.

\subsection{Adaptivity}
\label{sect:adaptive}

The procedure described above has two key parameters, the
dimension~$m$ of the Krylov subspace and the time-step~$\tau$. These
need to be chosen appropriately. As we cannot expect the user to make
this choice, and the optimal values may change, the algorithm needs to
determine~$m$ and~$\tau$ adaptively.

We are using a time-stepping method, so adapting the step size~$\tau$
is similar to adaptivity in ODE solvers. This has been studied
extensively. It is described by \citeN[\S 39]{butcher08nmf} and
\citeN[\S II.4]{hairer93sod}, among others. The basic idea is as
follows. We assume that the time-stepping method has order~$q$, so
that the error is approximately~$C\tau^{q+1}$ for some
constant~$C$. We somehow compute an error estimate~$\varepsilon$ and
choose a tolerance~Tol that the algorithm should satisfy. Then the
optimal choice for the new step size is
\begin{equation}
  \label{eq:taunew}
  \taunew = \tau_k \left( \frac{1}{\omega} \right)^{1/(q+1)} 
  \quad\text{where}\quad 
  \omega = \frac{\tend \|\varepsilon\|}{\tau_k \cdot \mathrm{Tol}}.
\end{equation}
The factor~$\tend$ is included so that the method is invariant under
time scalings. Usually, a safety factor~$\gamma$ is added to ensure
that the error will probably satisfy the error tolerance, changing the
formula to $\taunew = \tau_k (\gamma/\omega)^{1/(q+1)}$.  Common
choices are $\gamma = 0.25$ and $\gamma = 0.38$. However, in our case
the consequence of rejecting a step is that we computed the matrix
exponential in vain, while in ODE solvers the whole computation has to
be repeated when a step is rejected. We may thus be more adventurous
and therefore we take $\gamma = 0.8$.

In our scheme, the error estimate~$\varepsilon$ is given
by~\eqref{eq:errest}. However, what is the order~$q$ for our scheme?
The \textsl{a~priori} estimate~\eqref{eq:1} suggests that the order
equals the dimension~$m$ of the Krylov subspace. Experiments confirm
that the error is indeed proportional to~$\tau^{m+1}$ in the limit
$\tau\to0$. However, for finite step size the error is better
described by $\tau^q$ with a smaller exponent~$q$. Let us call the
exponent~$q$ which provides the best fit around a given value step
size~$\tau$ the ``heuristic order'', for lack of a better term.

We can estimate the heuristic order if we have attempted two step
sizes during the same step, which happens if we have just rejected a
step and reduced the step size. The estimate for the heuristic order
is then
\begin{equation}
  \label{eq:6}
  \hat{q} = \frac{\log(\tau / \tau_{\text{old}})}%
  {\log(\|\varepsilon\| / \|\varepsilon_{\text{old}}\|)} - 1,
\end{equation}
where $\varepsilon$ and~$\varepsilon_{\text{old}}$ denote the error
estimates produced when attempting step size~$\tau$
and~$\tau_{\text{old}}$, respectively. In all other cases, we use
$\hat{q} = \frac14m$; there is no rigorous argument behind the choice
of~$\frac14$ but it seems to yield good performance in practice.

With this estimate for the heuristic order, we compute the suggested
new step size as
\begin{equation}
  \label{eq:2}
  \taunew = \tau_k \left( \frac{\gamma}{\omega} \right)^{1/(\hat{q}+1)}.
\end{equation}
The other parameter that we want to adapt is the Krylov subspace
dimension~$m$. The error bound~\eqref{eq:1} suggests that, at least
for modest changes of~$m$, the error is approximately equal to
$C\kappa^{-m}$ for some values of~$C$ and~$k$. Again, we can estimate
$\kappa$ if we have error estimates corresponding to two different
values of~$m$:
\begin{equation}
  \label{eq:7}
  \hat\kappa = \left( \frac{\|\varepsilon\|}{\|\varepsilon_\text{old}\|}
  \right)^{1/(m_\text{old}-m)}.  
\end{equation}
If this formula cannot be used, then we take $\hat\kappa = 2$. Given
this estimate, the minimal~$m$ which satisfies the required tolerance
is given by
\begin{equation}
  \label{eq:3}
  \mnew = m + \frac{\log(\omega/\gamma)}{\log\hat\kappa}.
\end{equation}
We now have to choose between two possibilities: either we keep~$m$
constant and change~$\tau$ to~$\taunew$, or we keep~$\tau$ constant
and change~$m$ to~$\mnew$. We will pick the cheapest option. To
advance from~$t_k$ to~$t_{k+1}$, we need to evaluate~\eqref{eq:step}.
Computation of the vectors~$w_j$ requires $2(p-1)(N_A+n)$ flops. Then,
we need to do $m$~steps of the Arnoldi algorithm, for a cost of
$\tfrac32(m^2-m+1)n + 2mN_A$ flops. If $A$ is symmetric, we will use
the Lanczos algorithm and the costs drops to $3(2m-1)n + 2mN_A$ flops.
To compute $\varphi_p(\tau_kA)w_p$ in~\eqref{eq:step}
using~\eqref{eq:phipapprox2}, we need to exponentiate a matrix of size
$m+p+1$, costing $M(\hat{H}_m)\,(m+p+1)^3$ flops with $M(\hat{H}_m)$
given by~\eqref{eq:8}. Finally, the scalar multiplications and
vector additions in~\eqref{eq:step} requires a further $(2p+1)n$~flops.
All together, we find that the cost of a single step is
\begin{equation}
  \label{eq:4}
  C_1(m) = \begin{cases}
    (m+p)N_A + 3(m+p)n + M(H_m) \, (m+p+1)^3,
    & \text {for Lanczos;} \\[1ex]
    (m+p)N_A + (m^2+3p+2)n + M(H_m) \, (m+p+1)^3,
    & \text {for Arnoldi.}
  \end{cases}
\end{equation}
This needs to be multiplied with the number of steps required to go
from the current time~$t_k$ to the end point $t=\tend$. So the total
cost is
\begin{equation}
  \label{eq:5}
  C(\tau,m) = \left\lceil \frac{\tend-t_k}{\tau} \right\rceil C_1(m).
\end{equation}
We compute $C(\taunew,m)$ and $C(\tau,\mnew)$ according to this
formula. If $C(\taunew,m)$ is smaller, then we change the time-step
to~$\taunew$ and leave $m$ unchanged. However, to prevent too large
changes in~$\tau$ we restrict it to change by no more than a
factor~5. Similarly, if $C(\taunew,m)$ is smaller, then $\tau$ remains
as it is and we change $m$ to~$\mnew$, except that we restrict it to
change by no more as a factor~$\frac43$ (this factor is chosen
following~\citeN{hochbruck98eif}).

Finally, the step is accepted if $\omega > \delta$ where $\delta =
1.2$. Thus, we allow that the tolerance is slightly exceeded. The idea
is that our adaptivity procedure aims to get $\omega$ down to~$\gamma
= 0.8$, so that usually we stay well below the tolerance and hence we
may permit ourselves to exceed it occasionally.  The resulting
algorithm is summarized in Algorithms~\ref{alg:mainalg}
and~\ref{alg:tau_and_m}.

\begin{algorithm}
  \caption{Computing the linear combination~\eqref{eq:lincomb}.}
  \label{alg:mainalg}
  \begin{algorithmic}[0]
    \STATE $t = 0; \, k = 0; \, u_k = b_0$
    \STATE Evaluate initial $\tau$ using \eqref{eq:tau_0}; 
    \STATE Initial guess $m = 10$
    \REPEAT
    \STATE Compute $w_0,\ldots,w_p$ according to \eqref{eq:wrecurrence}
    \REPEAT
    \STATE Compute $H_m$ and $B_m$ using Algorithm~\ref{alg:arnoldi}
    or~\ref{alg:lanczos}
    \STATE $F = $ approximation to $\varphi_p(\tau A) w_p$ given by
    \eqref{eq:phipapprox2} 
    \STATE $\varepsilon = $ error estimate given by \eqref{eq:errest}
    \STATE Compute $\omega$ according to~\eqref{eq:taunew}
    \STATE Compute $\taunew$ and $\mnew$ using Algorithm~\ref{alg:tau_and_m}
    \STATE Compute $C(\taunew,m)$ and $C(\tau,\mnew)$ according
    to~\eqref{eq:8}, \eqref{eq:4} and \eqref{eq:5}
    \IF{$C(\taunew,m) < C(\tau,\mnew)$}
    \STATE $\tau = \min \bigl\{ \max \bigl\{ \taunew, \tfrac15\tau
    \bigr\}, 2\tau, 1-t \bigr\}$ 
    \ELSE
    \STATE $m = \min \bigl\{ \max \bigl\{ \mnew,
    \lfloor\tfrac34m\rfloor, 1 \bigr\}, \lceil\tfrac43m\rceil \bigr\}$ 
    \ENDIF
    \UNTIL $\omega \le \delta$
    \STATE Compute $u_{k+1}$ according to~\eqref{eq:step}
    \STATE $t = t + \tau; \, k = k + 1$
    \UNTIL $t = 1$
    \RETURN $u_k$
  \end{algorithmic}
\end{algorithm}

\begin{algorithm}
  \caption{Computing $\taunew$ and $\mnew$.}
  \label{alg:tau_and_m}
  \begin{algorithmic}[0]
    \IF{previous step was rejected and $\tau$ was reduced}
    \STATE Compute $\hat{q}$ according to~\eqref{eq:6}
    \ELSIF{previous step was rejected and $\hat{q}$ was computed in
      previous step} 
    \STATE Keep old value of $\hat{q}$
    \ELSE
    \STATE $\hat{q} = \tfrac14m$
    \ENDIF
    \STATE Compute $\taunew$ according to~\eqref{eq:2}
    \IF{previous step was rejected and $m$ was reduced}
    \STATE Compute $\hat\kappa$ according to~\eqref{eq:7}
    \ELSIF{previous step was rejected and $\hat\kappa$ was computed
      in previous step}
    \STATE Keep old value of $\hat\kappa$
    \ELSE
    \STATE $\hat\kappa = 2$
    \ENDIF
    \STATE Compute $\mnew$ according to~\eqref{eq:3}
  \end{algorithmic}
\end{algorithm}

\subsection{The \matlab\ function \phipm}
\label{sect:matlab}

The algorithm described above is implemented in a \matlab\ function
called \phipm. In terms of computing power, the system requirements
are modest.  Any computer capable of running a moderately up-to-date
version of {\matlab} is sufficient. 

A call of the \phipm\ function has the form
\begin{verbatim}
  [u, stats] = phipm(t, A, b, tol, symm, m)
\end{verbatim}
There are three mandatory input arguments and one mandatory output
argument; the other arguments are optional. The first input argument
is~\texttt{t}, the final time, $\tend$, for the differential
equation~\eqref{eq:phide}. This is generally chosen to be $t=1$ because
the solution~\eqref{eq:lincomb2} of~\eqref{eq:phide} at $t=1$ equals
the linear combination~\eqref{eq:lincomb}. The second argument is the
$n$-by-$n$ matrix argument of the $\varphi$-functions. The \phipm\
function can also be used without forming the matrix~$A$ explicitly,
by setting the argument \texttt{A} to a function which, given a
vector~$b$, computes~$Ab$. Finally, \texttt{b}~is an $n$-by-$(p+1)$
matrix with columns representing the vectors $b_0,b_1,\ldots,b_p$ to
be multiplied by the corresponding $\varphi$-functions. There is one
mandatory output argument: \texttt{u}, the numerical approximation to
the solution of~\eqref{eq:phide} at the final time~$\tend$.

There are three optional input arguments. The first one
is~\texttt{tol}, the tolerance~Tol in~\eqref{eq:6}. The default
tolerance is~$10^{-7}$.  Then comes~\texttt{symm}, a boolean
indicating whether $A$ is symmetric (\texttt{symm=1}) or not
(\texttt{symm=0}). If not supplied, the code determines itself whether
$A$ is symmetric if the matrix is passed explicitly, and assumes that
$A$ is not symmetric if the matrix is given implicitly. The final
input argument is \texttt{m}, the initial choice for the dimension of
the Krylov subspace. The initial choice is $m=1$ by default. There is
also one optional output argument: \texttt{stats}, for providing the
user with various statistics of the computation. It is a vector with
four entries: \texttt{stats(1)} is the number of steps needed to
complete the integration, \texttt{stats(2)} is the number of rejected
steps, \texttt{stats(3)} is the number of matrix-vector products, and
\texttt{stats(4)} is the number of matrix exponentials computed.


\section{Numerical experiments}
\label{sect:numer}

In this section we perform several numerical experiments, which
illustrate the advantages of the approach that has been outlined in
the previous sections. We compare the function \phipm\ with various
state-of-the-art numerical algorithms. The first experiment compares
several numerical ODE solvers on a large system of linear ODEs,
resulting from the finite-difference discretization of the Heston PDE,
a common example from the mathematical finance literature. The second
experiment compares our function \phipm\ and the \expv\ and \phiv\
functions from \expokit\ on various large sparse matrices. This
repeats the experiment reported by~\citeN{sidje98esp}.

All experiments use a MacBookPro with 2.66 GHz Intel Core 2 Duo
processor and 4GB 1067 MHz DDR3 memory. We use \matlab\ Version~7.11
(R2010b) for all experiments, which computes the matrix exponential as
described in \citeN{higham05tsa}. We have noticed that the results
described in this section depend on the specifications of the computer
but the overall nature of the numerical experiments remains the same.

\subsection{The Heston equation in financial mathematics}
\label{sect:heston}

A European call option gives its owner the right (but not obligation)
to buy a certain asset for a certain price (called the strike price)
at a certain time (the expiration date). European call options with
stochastic volatility, modeled by a stochastic mean-reverting
differential equation, have been successfully priced
by~\citeN{heston93acf}. The Heston pricing formulae are a natural
extension of the celebrated Black--Scholes--Merton pricing
formulae. Despite the existence of a semi-closed form solution, which
requires the numerical computation of an indefinite integral, the
so-called Heston PDE is often used as a test example to compare
various numerical integrators. This example follows
closely~\citeN{inthout07asi} and~\citeN{inthout10afd}.

Let $U(s,v,t)$ denote the European call option price at time $T-t$,
where $s$ is the price of the underlying asset and $v$ the variance in
the asset price at that time. Here, $T$ is the expiration date of the
option. Heston's stochastic volatility model ensures that the price of
a European call option satisfies the time-dependent
convection-diffusion-reaction equation
\begin{equation*}
  U_t = \frac{1}{2} v s^2 U_{ss} + \rho \lambda v s U_{vs} +
  \frac{1}{2} \lambda^2 v U_{vv} + (r_d - r_f) s U_s + \kappa (\eta - v) U_v
  - r_d U.
\end{equation*}
This equation is posed on the unbounded spatial domain $s > 0$ and $v
\geq 0$, while $t$ ranges from 0 to~$T$. Here, $\rho \in [-1,1]$
represents the correlation between the Wiener processes modeling the
asset price and its variance, $\lambda$ is a positive scaling
constant, $r_d$ and $r_f$ are constants representing the risk-neutral
domestic and foreign interest rates respectively, $\eta$ 
represents the mean level of $v$ and $\kappa$ the rate at which $v$
reverts to $\eta$. The payoff of the European call option provides the
initial condition
\begin{equation*}
  U(s,v,0) = \max(s-K,0),
\end{equation*}
where $K \geq 0$ denotes the strike price. 

The unbounded spatial domain must be restricted in size in
computations and we choose the sufficiently large rectangle $[0, S]
\times [0, V]$. Usually $S$ and $V$ are chosen much larger than the
values of $s$ and $v$ of practical interest. This is a commonly used
approach in financial modeling so that if the boundary conditions are
imperfect their effect is minimized, see
\citeN[p.~121]{tavella00pfi}.  Suitable boundary conditions for $0 <
t \leq T$ are
\begin{align*}
  & U(0,v,t) = 0, \\
  & U(s,V,t) = s, \\
  & U_s(S,v,t) = 1, \\
  & U_t(s,0,t) - (r_d - r_f) s U_s(s,0,t) - \kappa \eta U_v(s,0,t) + r U(s,0,t) =
  0. 
\end{align*} 
Note that the boundary and initial conditions are inconsistent or
non-matching, that is the boundary and initial conditions at $t=0$ do
not agree.  

We discretize the spatial domain using a uniform rectangular mesh with
mesh lengths $\Delta s$ and $\Delta v$ and use standard second-order
finite differences to approximate the derivatives as follows:
{\allowdisplaybreaks
\begin{align*}
  (U_s)_{i,j} &\approx \frac{U_{i+1,j}-U_{i-1,j}}{2 \Delta s}, \\
  (U_{ss})_{i,j} &\approx \frac{U_{i+1,j}-2U_{i,j}+U_{i-1,j}}{\Delta
    s^2}, \\
  (U_v)_{i,j} &\approx \frac{U_{i,j+1}-U_{i,j-1}}{2 \Delta v}, \\
  (U_v)_{i,0} &\approx \frac{-3U_{i,0}+4U_{i,1}-U_{i,2}}{2 \Delta v}, \\
  (U_{vv})_{i,j} &\approx \frac{U_{i,j+1}-2U_{i,j}+U_{i,j-1}}{\Delta
    v^2}, \\
  (U_{sv})_{i,j} &\approx
  \frac{U_{i+1,j+1}+U_{i-1,j-1}-U_{i-1,j+1}-U_{i+1,j-1}}{4 \Delta v
    \Delta s}.
\end{align*}
}
The boundary associated to $v=0$ is included in the mesh, but the
other three boundaries are not.  Combining the finite-difference
discretization and the boundary conditions leads to a large system of
ODEs of the form
\begin{equation*}
  u'(t) = A u(t) + b_1, \qquad u(0) = b_0.
\end{equation*}
The exact solution for this system is given in
Lemma~\ref{lem:exactsolution}. This is a natural problem for the
\phipm\ solver.

We compare five methods: the scheme of \citeN{crank47apm}, two
Alternating Direction Implicit (ADI) schemes, \odeofs\ from \matlab\
and the \phipm\ method described in this paper. The two ADI schemes
are the method due to \citeN{douglas56otn} with $\theta=\frac{1}{2}$,
and the method of~\citeN{hundsdorfer03nso} with $\theta=\frac{3}{10}$
and $\mu=\frac{1}{2}$. The first ADI method is of order one and the
second method is of order two. \citeN{inthout07asi} lists each of the ADI
methods described above and explains necessary implementation details;
the stability of these methods is discussed in \citeN{inthout08uso}.
To compare \phipm\ with an adaptive solver we choose the in-built
\matlab\ solver \odeofs, which is a variable step size, variable
order implementation of the backward differentiation formulae (BDF).
The Jacobian of the right-hand side (that is, the matrix~$A$) is
passed to \odeofs; this makes \odeofs\ considerably faster.

We choose the same problem parameters as in the paper by
\cite{inthout07asi}: namely $\kappa=2$, $\nu=0.2$, $\lambda=0.3$,
$\rho=0.8$, $r_d=0.03$, $r_f=0.0$, the option maturity $T=1$ and the
strike price $K=100$. The spatial domain is truncated to $[0, 8K]
\times [0, 5]$. We use a grid with 100~points in the $s$-direction and
51~points in the $v$-direction (recall that $v=0$ is included in the
grid and the other borders are not). This results in a system with
5100~degrees of freedom, with $nnz=44,800$ non-zero elements.
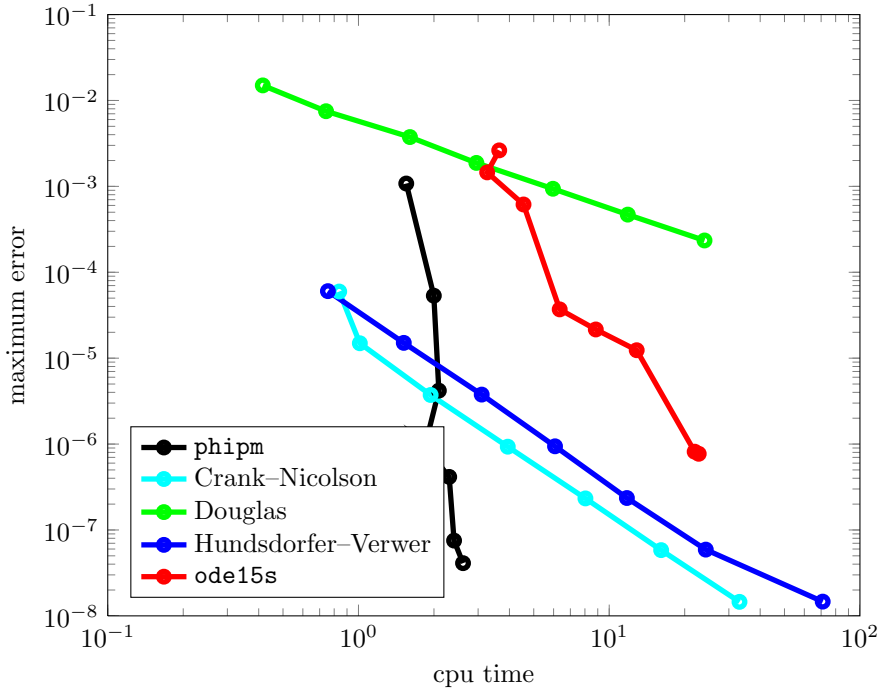
\begin{figure}[!ht]
  \centering
%
%
\begin{tikzpicture}

\definecolor{mycolor1}{rgb}{0,1,1}

\begin{loglogaxis}[%
view={0}{90},
scale only axis,
width=10cm,
height=8cm,
xmin=0.1, xmax=100,
ymin=1e-08, ymax=0.1,
xlabel={cpu time},
ylabel={maximum error},
axis on top,
legend entries={\phipm,Crank--Nicolson,Douglas,Hundsdorfer--Verwer,\texttt{ode15s}},
legend style={at={(0.03,0.03)},anchor=south west,nodes=right}]
\addplot [
color=black,
solid,
line width=2.0pt,
mark=o,
mark options={solid}
]
coordinates{ (1.55077,0.00107945) (1.99768,5.33854e-05) (2.0919,4.18875e-06) (1.85109,1.00292e-06) (1.7339,1.15505e-06) (2.3005,4.14903e-07) (2.40563,7.52279e-08) (2.612,4.10747e-08)
};

\addplot [
color=mycolor1,
solid,
line width=2.0pt,
mark=o,
mark options={solid}
]
coordinates{ (0.837858,5.96555e-05) (1.0126,1.49139e-05) (1.94022,3.72848e-06) (3.93932,9.3212e-07) (8.03731,2.33023e-07) (16.1132,5.82568e-08) (33.0858,1.45607e-08)
};

\addplot [
color=green,
solid,
line width=2.0pt,
mark=o,
mark options={solid}
]
coordinates{ (0.415302,0.0149806) (0.742444,0.00749779) (1.60421,0.00375078) (2.95291,0.00187586) (5.96359,0.00093805) (11.8601,0.000469054) (23.993,0.000234535)
};

\addplot [
color=blue,
solid,
line width=2.0pt,
mark=o,
mark options={solid}
]
coordinates{ (0.755429,6.03039e-05) (1.51611,1.50726e-05) (3.10091,3.76769e-06) (6.0796,9.41858e-07) (11.7633,2.35518e-07) (24.273,5.90694e-08) (71.036,1.46546e-08)
};

\addplot [
color=red,
solid,
line width=2.0pt,
mark=o,
mark options={solid}
]
coordinates{ (3.64214,0.00262518) (3.26041,0.0014551) (4.55264,0.000615536) (6.34644,3.69866e-05) (8.84033,2.15846e-05) (12.8717,1.23718e-05) (21.9167,8.15551e-07) (22.7508,7.70858e-07)
};

\end{loglogaxis}
\end{tikzpicture}
 \caption{Plots of maximum error (on the domain $[0,2K] \times [0,1]$)
   against CPU time for the system of ODEs from the discretized Heston
   PDE. The two ADI schemes are the represented by the: green line
   (Douglas); blue line (Hundsdorfer and Verwer), the cyan line is
   Crank--Nicolson, the red line is \odeofs\ and the black line is
   \phipm. \label{fig:ex1}}
\end{figure}
Figure~\ref{fig:ex1} shows the error of the solvers against the CPU
time. The Crank--Nicolson and ADI schemes are run with the step size
decreasing in powers of two from~$2^{-8}$ to~$2^{-14}$, while \phipm\
and \odeofs\ are run with the tolerance decreasing geometrically
from~$10^{-1}$ to~$10^{-6}$.  The error is computed by comparing the
numerical solution against the ``exact'' solution, as computed using
two different methods with very small stepsizes so the the solution
was accurate to within~$10^{-10}$. We measure the maximum error at
time $t=T$ of the numerical solution satisfying $[0,2K] \times [0,1]$,
a smaller domain than the computational domain.

The first surprising result is that the Crank--Nicolson method
outperforms the ADI methods. This requires the use of column
reordering and row scaling in the LU decomposition as implemented in
\matlab. A call to this function takes the form \texttt{[L,U,P,Q,R] =
  lu(X)}. Practitioners are interested in this problem for accuracy
levels of around $10^{-4}$ or one basis point. The \phipm\ method is
the most efficient for an accuracy of around $10^{-6}$. Similar
results hold for the four parameter sets listed in
\citeN{inthout10afd}, with the cross-over point close
to~$10^{-6}$. Note that for these four parameter sets the $U_v$ term
was discretized using upwinding when $v>1$. Recently, we have applied
Krylov subspace methods to a variety of option pricing problems. We
find that Krylov subspace methods significantly outperform ADI methods
for dimension higher than two; we refer the interested reader to the
forthcoming paper \cite{niesen11ksm}.

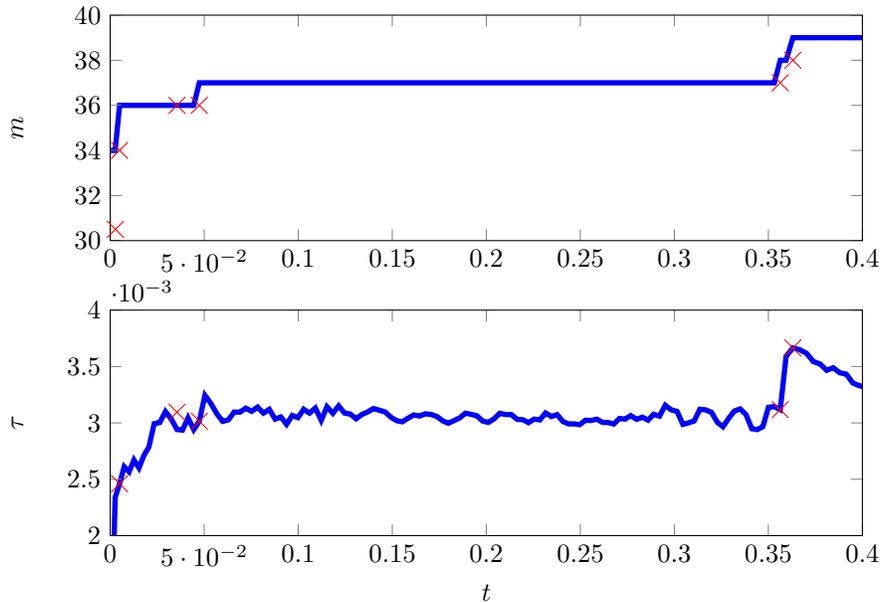
\begin{figure}
  \centering
%
%
\begin{tikzpicture}

\begin{axis}[%
view={0}{90},
name=plot1,
scale only axis,
width=10cm,
height=3cm,
xmin=0, xmax=0.4,
ymin=30, ymax=40,
ylabel={$m$},
axis on top]
\addplot [
color=blue,
solid,
line width=2.0pt
]
coordinates{ (0,34) (0.00121169,34) (0.00264517,34) (0.00498362,36) (0.00744394,36) (0.0100553,36) (0.0126213,36) (0.0152923,36) (0.0178931,36) (0.0206028,36) (0.0233902,36) (0.0263832,36) (0.0293864,36) (0.0324868,36) (0.0355215,36) (0.0384622,36) (0.0413982,36) (0.0444425,36) (0.0473825,37) (0.0503972,37) (0.0536416,37) (0.0568113,37) (0.0598898,37) (0.0629031,37) (0.0659303,37) (0.0690262,37) (0.0721217,37) (0.0752519,37) (0.0783545,37) (0.0814946,37) (0.0845814,37) (0.0877016,37) (0.0907351,37) (0.0937878,37) (0.0967753,37) (0.0998411,37) (0.102888,37) (0.106009,37) (0.109095,37) (0.112227,37) (0.115249,37) (0.118393,37) (0.121478,37) (0.124628,37) (0.127715,37) (0.130794,37) (0.13383,37) (0.136904,37) (0.140001,37) (0.143129,37) (0.146241,37) (0.149338,37) (0.152387,37) (0.155406,37) (0.158416,37) (0.161458,37) (0.164528,37) (0.16759,37) (0.17067,37) (0.173745,37) (0.176801,37) (0.17982,37) (0.182816,37) (0.185836,37) (0.18888,37) (0.191966,37) (0.195042,37) (0.198104,37) (0.20112,37) (0.204125,37) (0.20716,37) (0.210246,37) (0.21332,37) (0.216394,37) (0.219425,37) (0.222453,37) (0.225456,37) (0.228489,37) (0.231516,37) (0.234604,37) (0.237661,37) (0.240734,37) (0.243746,37) (0.246737,37) (0.249729,37) (0.252713,37) (0.255736,37) (0.258758,37) (0.261791,37) (0.264795,37) (0.267798,37) (0.270788,37) (0.273827,37) (0.276861,37) (0.279926,37) (0.282957,37) (0.286007,37) (0.289031,37) (0.292108,37) (0.295168,37) (0.298325,37) (0.30144,37) (0.304542,37) (0.307529,37) (0.310529,37) (0.313546,37) (0.316667,37) (0.319782,37) (0.322875,37) (0.32588,37) (0.328846,37) (0.331885,37) (0.33499,37) (0.338113,37) (0.341187,37) (0.344135,37) (0.347076,37) (0.350042,37) (0.353181,37) (0.356325,38) (0.359443,38) (0.363033,39) (0.366698,39) (0.370347,39) (0.373963,39) (0.377506,39) (0.381029,39) (0.384495,39) (0.387984,39) (0.391428,39) (0.394861,39) (0.398215,39) (0.401546,39)
};

\addplot [
color=red,
mark size=4.5pt,
only marks,
mark=x,
mark options={solid}
]
coordinates{ (0.00280329,30.5) (0.00498362,34) (0.0355215,36) (0.0473825,36) (0.356325,37) (0.363033,38) (0.646311,39)
};

\end{axis}

\begin{axis}[%
view={0}{90},
at=(plot1.below south west), anchor=above north west,
scale only axis,
width=10cm,
height=3cm,
xmin=0, xmax=0.4,
ymin=0.002, ymax=0.004,
xlabel={$t$},
ylabel={$\tau$},
axis on top]
\addplot [
color=blue,
solid,
line width=2.0pt
]
coordinates{ (0.00121169,0.00143349) (0.00264517,0.00233845) (0.00498362,0.00246032) (0.00744394,0.00261133) (0.0100553,0.00256602) (0.0126213,0.00267104) (0.0152923,0.00260075) (0.0178931,0.00270974) (0.0206028,0.00278742) (0.0233902,0.00299294) (0.0263832,0.00300324) (0.0293864,0.00310036) (0.0324868,0.00303468) (0.0355215,0.00294079) (0.0384622,0.00293595) (0.0413982,0.00304425) (0.0444425,0.00294009) (0.0473825,0.00301466) (0.0503972,0.00324436) (0.0536416,0.0031697) (0.0568113,0.0030785) (0.0598898,0.00301335) (0.0629031,0.0030272) (0.0659303,0.0030959) (0.0690262,0.00309543) (0.0721217,0.00313026) (0.0752519,0.00310255) (0.0783545,0.00314017) (0.0814946,0.00308675) (0.0845814,0.00312026) (0.0877016,0.00303342) (0.0907351,0.0030527) (0.0937878,0.0029875) (0.0967753,0.00306586) (0.0998411,0.00304671) (0.102888,0.00312092) (0.106009,0.0030863) (0.109095,0.00313184) (0.112227,0.0030225) (0.115249,0.00314347) (0.118393,0.00308479) (0.121478,0.00315075) (0.124628,0.00308697) (0.127715,0.0030784) (0.130794,0.00303591) (0.13383,0.00307456) (0.136904,0.00309698) (0.140001,0.00312756) (0.143129,0.00311255) (0.146241,0.00309628) (0.149338,0.00304943) (0.152387,0.00301928) (0.155406,0.00300991) (0.158416,0.00304205) (0.161458,0.00307011) (0.164528,0.00306116) (0.16759,0.00308059) (0.17067,0.00307438) (0.173745,0.00305666) (0.176801,0.00301851) (0.17982,0.0029967) (0.182816,0.00301966) (0.185836,0.00304433) (0.18888,0.00308576) (0.191966,0.00307587) (0.195042,0.00306236) (0.198104,0.00301599) (0.20112,0.00300434) (0.204125,0.00303572) (0.20716,0.00308522) (0.210246,0.00307411) (0.21332,0.00307411) (0.216394,0.00303134) (0.219425,0.00302825) (0.222453,0.00300234) (0.225456,0.00303351) (0.228489,0.00302626) (0.231516,0.00308863) (0.234604,0.00305638) (0.237661,0.0030734) (0.240734,0.0030123) (0.243746,0.00299118) (0.246737,0.00299118) (0.249729,0.00298447) (0.252713,0.00302326) (0.255736,0.00302118) (0.258758,0.00303307) (0.261791,0.0030045) (0.264795,0.00300289) (0.267798,0.00299016) (0.270788,0.00303917) (0.273827,0.00303358) (0.276861,0.00306466) (0.279926,0.00303107) (0.282957,0.00305046) (0.286007,0.00302427) (0.289031,0.00307654) (0.292108,0.00306026) (0.295168,0.00315649) (0.298325,0.00311554) (0.30144,0.0031019) (0.304542,0.00298672) (0.307529,0.0030004) (0.310529,0.00301673) (0.313546,0.00312087) (0.316667,0.00311519) (0.319782,0.00309347) (0.322875,0.00300454) (0.32588,0.00296557) (0.328846,0.00303908) (0.331885,0.00310511) (0.33499,0.00312358) (0.338113,0.00307364) (0.341187,0.00294842) (0.344135,0.00294051) (0.347076,0.00296649) (0.350042,0.00313833) (0.353181,0.00314437) (0.356325,0.00311766) (0.359443,0.00359044) (0.363033,0.00366474) (0.366698,0.00364901) (0.370347,0.00361631) (0.373963,0.00354313) (0.377506,0.00352256) (0.381029,0.00346582) (0.384495,0.00348921) (0.387984,0.00344424) (0.391428,0.00343251) (0.394861,0.00335444) (0.398215,0.00333048) (0.401546,0.00331622)
};

\addplot [
color=red,
mark size=4.5pt,
only marks,
mark=x,
mark options={solid}
]
coordinates{ (0,0.00121169) (0.00498362,0.00246032) (0.0355215,0.00309423) (0.0473825,0.00301466) (0.356325,0.00311766) (0.363033,0.00366474) (0.646311,0.00358813)
};

\end{axis}
\end{tikzpicture}
  \caption{Plots showing how the dimension $m$ of the Krylov subspace
    and the step size~$\tau$ change during the integration of the
    system of ODEs from the discretized Heston PDE, as solved by
    \phipm\ with a tolerance of~$10^{-4}$. The red crosses represent
    rejected steps. \label{fig:ex1hist}}
\end{figure}

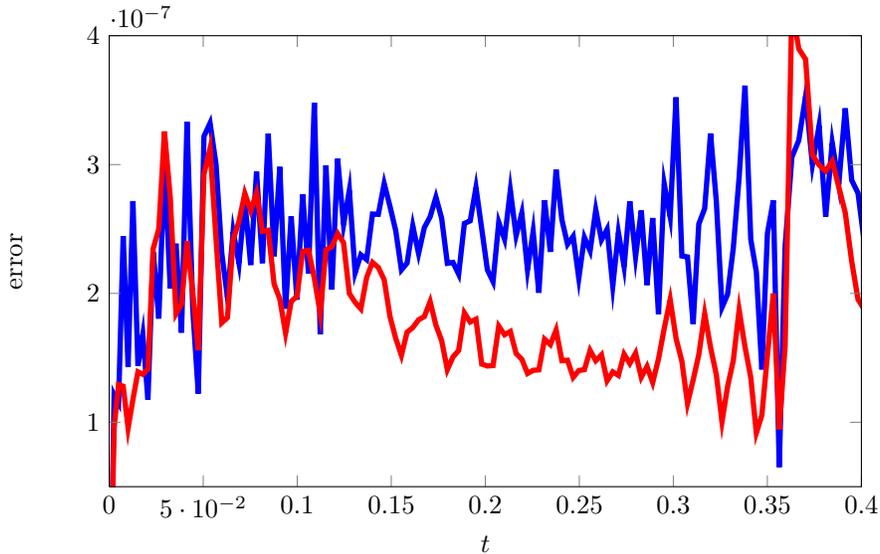
\begin{figure}
  \centering
%
%
\begin{tikzpicture}

\begin{axis}[%
view={0}{90},
scale only axis,
width=10cm,
height=6cm,
xmin=0, xmax=0.4,
ymin=5e-08, ymax=4e-07,
xlabel={$t$},
ylabel={error},
axis on top]
\addplot [
color=blue,
solid,
line width=2.0pt
]
coordinates{ (0,2.32251e-08) (0.00121169,1.79037e-09) (0.00264517,1.21471e-07) (0.00498362,1.15146e-07) (0.00744394,2.44547e-07) (0.0100553,1.43078e-07) (0.0126213,2.71646e-07) (0.0152923,1.43789e-07) (0.0178931,1.68091e-07) (0.0206028,1.17552e-07) (0.0233902,2.3215e-07) (0.0263832,1.80419e-07) (0.0293864,3.00743e-07) (0.0324868,2.03815e-07) (0.0355215,2.3866e-07) (0.0384622,1.69535e-07) (0.0413982,3.33153e-07) (0.0444425,1.87738e-07) (0.0473825,1.22276e-07) (0.0503972,3.21918e-07) (0.0536416,3.32189e-07) (0.0568113,3.00169e-07) (0.0598898,2.31052e-07) (0.0629031,1.96782e-07) (0.0659303,2.48019e-07) (0.0690262,2.2329e-07) (0.0721217,2.71884e-07) (0.0752519,2.2202e-07) (0.0783545,2.94421e-07) (0.0814946,2.23472e-07) (0.0845814,3.24086e-07) (0.0877016,2.28864e-07) (0.0907351,2.98194e-07) (0.0937878,1.88102e-07) (0.0967753,2.59902e-07) (0.0998411,1.95094e-07) (0.102888,2.76812e-07) (0.106009,2.15622e-07) (0.109095,3.4805e-07) (0.112227,1.68191e-07) (0.115249,2.99368e-07) (0.118393,2.02916e-07) (0.121478,3.04552e-07) (0.124628,2.53393e-07) (0.127715,2.80059e-07) (0.130794,2.16052e-07) (0.13383,2.29977e-07) (0.136904,2.26236e-07) (0.140001,2.61587e-07) (0.143129,2.61374e-07) (0.146241,2.85221e-07) (0.149338,2.6744e-07) (0.152387,2.48583e-07) (0.155406,2.18259e-07) (0.158416,2.23553e-07) (0.161458,2.52329e-07) (0.164528,2.30972e-07) (0.16759,2.5109e-07) (0.17067,2.59457e-07) (0.173745,2.74662e-07) (0.176801,2.58231e-07) (0.17982,2.23396e-07) (0.182816,2.24059e-07) (0.185836,2.14917e-07) (0.18888,2.54303e-07) (0.191966,2.56297e-07) (0.195042,2.82125e-07) (0.198104,2.50075e-07) (0.20112,2.18319e-07) (0.204125,2.09113e-07) (0.20716,2.55192e-07) (0.210246,2.42089e-07) (0.21332,2.79955e-07) (0.216394,2.44809e-07) (0.219425,2.62299e-07) (0.222453,2.18305e-07) (0.225456,2.48107e-07) (0.228489,2.00471e-07) (0.231516,2.72286e-07) (0.234604,2.32268e-07) (0.237661,2.96058e-07) (0.240734,2.57189e-07) (0.243746,2.38094e-07) (0.246737,2.44315e-07) (0.249729,2.11872e-07) (0.252713,2.43407e-07) (0.255736,2.33075e-07) (0.258758,2.64844e-07) (0.261791,2.41551e-07) (0.264795,2.49861e-07) (0.267798,2.05813e-07) (0.270788,2.47311e-07) (0.273827,2.2085e-07) (0.276861,2.71487e-07) (0.279926,2.28591e-07) (0.282957,2.643e-07) (0.286007,2.06479e-07) (0.289031,2.58501e-07) (0.292108,1.83852e-07) (0.295168,2.84941e-07) (0.298325,2.59565e-07) (0.30144,3.52153e-07) (0.304542,2.29047e-07) (0.307529,2.28283e-07) (0.310529,1.76315e-07) (0.313546,2.53911e-07) (0.316667,2.6587e-07) (0.319782,3.24127e-07) (0.322875,2.71221e-07) (0.32588,1.89163e-07) (0.328846,1.99284e-07) (0.331885,2.35149e-07) (0.33499,2.90063e-07) (0.338113,3.61272e-07) (0.341187,2.41812e-07) (0.344135,2.16853e-07) (0.347076,1.40968e-07) (0.350042,2.46643e-07) (0.353181,2.72203e-07) (0.356325,6.52199e-08) (0.359443,2.36451e-07) (0.363033,3.05733e-07) (0.366698,3.18701e-07) (0.370347,3.53121e-07) (0.373963,3.00006e-07) (0.377506,3.30146e-07) (0.381029,2.59663e-07) (0.384495,3.16775e-07) (0.387984,2.8486e-07) (0.391428,3.43651e-07) (0.394861,2.87784e-07) (0.398215,2.77818e-07) (0.401546,2.39417e-07)
};

\addplot [
color=red,
solid,
line width=2.0pt
]
coordinates{ (0,2.1527e-08) (0.00121169,1.62436e-09) (0.00264517,9.58928e-08) (0.00498362,1.29218e-07) (0.00744394,1.27923e-07) (0.0100553,9.54734e-08) (0.0126213,1.17747e-07) (0.0152923,1.39112e-07) (0.0178931,1.37203e-07) (0.0206028,1.41253e-07) (0.0233902,2.34753e-07) (0.0263832,2.50116e-07) (0.0293864,3.25736e-07) (0.0324868,2.74297e-07) (0.0355215,1.8419e-07) (0.0384622,1.91696e-07) (0.0413982,2.40479e-07) (0.0444425,2.0058e-07) (0.0473825,1.55825e-07) (0.0503972,2.92633e-07) (0.0536416,3.12526e-07) (0.0568113,2.42441e-07) (0.0598898,1.77195e-07) (0.0629031,1.81308e-07) (0.0659303,2.4171e-07) (0.0690262,2.55882e-07) (0.0721217,2.77591e-07) (0.0752519,2.64238e-07) (0.0783545,2.77967e-07) (0.0814946,2.48181e-07) (0.0845814,2.48986e-07) (0.0877016,2.07235e-07) (0.0907351,1.95626e-07) (0.0937878,1.69246e-07) (0.0967753,1.93739e-07) (0.0998411,1.9778e-07) (0.102888,2.32568e-07) (0.106009,2.33416e-07) (0.109095,2.11843e-07) (0.112227,1.84385e-07) (0.115249,2.337e-07) (0.118393,2.3631e-07) (0.121478,2.46453e-07) (0.124628,2.39343e-07) (0.127715,1.99941e-07) (0.130794,1.93053e-07) (0.13383,1.88012e-07) (0.136904,2.12346e-07) (0.140001,2.24083e-07) (0.143129,2.20163e-07) (0.146241,2.10629e-07) (0.149338,1.81969e-07) (0.152387,1.65007e-07) (0.155406,1.51937e-07) (0.158416,1.69947e-07) (0.161458,1.73456e-07) (0.164528,1.79544e-07) (0.16759,1.81987e-07) (0.17067,1.93652e-07) (0.173745,1.75363e-07) (0.176801,1.63301e-07) (0.17982,1.40834e-07) (0.182816,1.50655e-07) (0.185836,1.55706e-07) (0.18888,1.849e-07) (0.191966,1.77825e-07) (0.195042,1.7996e-07) (0.198104,1.4514e-07) (0.20112,1.43667e-07) (0.204125,1.44016e-07) (0.20716,1.74546e-07) (0.210246,1.67932e-07) (0.21332,1.70456e-07) (0.216394,1.53278e-07) (0.219425,1.48923e-07) (0.222453,1.37865e-07) (0.225456,1.4031e-07) (0.228489,1.40674e-07) (0.231516,1.6451e-07) (0.234604,1.6029e-07) (0.237661,1.7161e-07) (0.240734,1.4777e-07) (0.243746,1.48081e-07) (0.246737,1.34831e-07) (0.249729,1.40176e-07) (0.252713,1.40816e-07) (0.255736,1.55794e-07) (0.258758,1.47765e-07) (0.261791,1.52335e-07) (0.264795,1.32198e-07) (0.267798,1.39138e-07) (0.270788,1.36384e-07) (0.273827,1.52864e-07) (0.276861,1.4561e-07) (0.279926,1.53638e-07) (0.282957,1.34931e-07) (0.286007,1.43298e-07) (0.289031,1.3092e-07) (0.292108,1.49133e-07) (0.295168,1.72961e-07) (0.298325,1.95134e-07) (0.30144,1.6509e-07) (0.304542,1.47248e-07) (0.307529,1.14671e-07) (0.310529,1.32813e-07) (0.313546,1.52461e-07) (0.316667,1.84128e-07) (0.319782,1.58122e-07) (0.322875,1.36728e-07) (0.32588,1.00517e-07) (0.328846,1.27649e-07) (0.331885,1.47877e-07) (0.33499,1.87141e-07) (0.338113,1.58447e-07) (0.341187,1.34695e-07) (0.344135,9.17601e-08) (0.347076,1.05445e-07) (0.350042,1.52037e-07) (0.353181,1.99958e-07) (0.356325,9.45797e-08) (0.359443,1.59298e-07) (0.363033,4.22398e-07) (0.366698,3.89932e-07) (0.370347,3.81651e-07) (0.373963,3.07406e-07) (0.377506,2.99617e-07) (0.381029,2.9498e-07) (0.384495,3.0269e-07) (0.387984,2.82524e-07) (0.391428,2.62663e-07) (0.394861,2.24784e-07) (0.398215,1.95373e-07) (0.401546,1.87537e-07)
};

\end{axis}
\end{tikzpicture}
  \caption{The error estimate (blue) and the actual error (red)
    computed during the integration of the  system of ODEs from the
    discretized Heston PDE. Only accepted steps are shown. Again, the
    tolerance is~$10^{-4}$. \label{fig:ex1error}}
\end{figure}

Figure~\ref{fig:ex1hist} shows how the
Krylov subspace size (top graph) and the step size (bottom graph)
change during the integration interval. They both vary during the
integration, which shows that the adaptivity presented in
Section~\ref{sect:adaptive} is effective. Figure~\ref{fig:ex1error}
plots the error estimate and the actual error during the integration
interval. The error estimate is always larger than the actual error in
this experiment. 

One worrying aspect in these figures is that the step size sequence in
Figure~\ref{fig:ex1hist} zig-zags and that the estimated error in
Figure~\ref{fig:ex1error} varies alot from step to step. This might
indicate stability problems, perhaps caused by augmenting the matrix
in~(\ref{eq:hathm}) or by the matrix-vector multiplications
in~(\ref{eq:wrecurrence}). The \matlab\ solver \odeofs\ also has a
rather strange stepsize pattern for this problem, where stepsizes are
constant or rapidly increasing. The error behaviour of Krylov methods
is a difficult problem and we intend to investigate this further in
future work; perhaps the use of control theory techniques can be
useful in smoothing the error estimate and choice of Krylov subspace
size.  Also, given that this equation has a semi-closed form solution,
the application to more exotic options, such as barrier, American and
Asian options, is of more practical significance and in our minds for
future work.

Finally, we draw attention to the fact that the linear algebra in
\odeofs\, the Crank--Nicolson and all the ADI methods, is performed
using highly optimized routines written in a low-level language,
whereas all the computations in \phipm\ are done in the native
\matlab\ language. We have noticed that the Crank--Nicolson and the
ADI methods performance has improved relative to \phipm\ in newer
version of \matlab. We intend to release C++ and CUDA versions of this
software in the near future.

\subsection{Comparisons between \phipm\ and the functions in \expokit}
\label{sect:expokit}

The University of Florida sparse matrix collection, compiled by
\citeN{davis08uof}, is an excellent, well-maintained website
containing various classes of sparse matrices. All the sparse matrices
that we use in this subsection are available from that website.

\subsubsection*{Experiment 1}
 
We compute the action of the matrix exponential of four different
sparse matrices described below on certain vectors $b_0$. We use the
\expv\ and \phiv\ routines from \expokit\ and the \phipm\ routine
described in this paper to compute $\e^{tA}b_0$ (the \phiv\ routine
computes $\e^{tA}b_0 + \varphi_1(tA)b_1$, so by setting $b_1=0$ this
code can also be used to compute $\e^{tA}b_0$). The \expv\ and \phiv\
routines are implemented in \matlab\ like the \phipm\ routine, but
they do not use the Lanczos algorithm when $A$ is symmetric.
Therefore, we implemented a variant of \phipm\ which computes
\eqref{eq:lincomb2} using a Krylov subspace of fixed dimension $m=30$.
This variant is called \phip. It can be considered as an extension of
the \phiv\ code to symmetric matrices and $p \ge 1$ and similar to the
code developed by \citeN{sofroniou07eco}.

The four matrices we consider are as follows:
\begin{itemize}
\item The first matrix, \texttt{orani678} from the Harwell--Boeing
  collection \cite{duff89smt}, is an unsymmetric sparse matrix of
  order $n=2,529$ with $nnz=90,158$ nonzero elements. We choose
  $t=10$, $b_0=[1,1,\ldots,1]^T$ and $\mathrm{Tol}=\sqrt{e}$, where
  $e=2^{-52}$ denotes the machine epsilon.
\item The second example is \texttt{bcspwr10}, also from the
  Harwell--Boeing collection. This is a symmetric Hermitian sparse
  matrix of order $n=5,300$ with $nnz=21,842$. We set $t=2$,
  $b_0=[1,0,\ldots,0,1]^T$ and $\mathrm{Tol}=10^{-5}$.
\item The third example, \texttt{gr\_30\_30}, again of the
  Harwell--Boeing collection, is a symmetric matrix arising when
  discretizing the Laplacian operator using a nine-point stencil on a
  $30 \times 30$ grid. This yields an order $n=900$ sparse matrix with
  $nnz=7,744$ nonzero elements. Here, we choose $t=2$ and compute
  $\e^{-tA} \e^{tA} b_0$, where $b_0=[1,1,\ldots,1]^T$, in two steps:
  first the forward step computes $w=e^{tA}b_0$ and then we use the
  result $w$ as the operand vector for the reverse part $e^{-tA}w$.
  The result should approximate $b_0$ with $\mathrm{Tol}=10^{-14}$. 
\item The final example uses \texttt{helm2d03} from the
  \texttt{GHS\_indef} collection (see \cite{davis08uof} for more
  details), which describes the Helmholtz equation $-\Delta^T \nabla u
  - 10000 u = 1$ on a unit square with Dirichlet $u=0$ boundary
  conditions. The resulting symmetric sparse matrix of order
  $n=392,257$ has $nnz=2,741,935$ nonzero elements. We compute
  $\e^{tA} b_0 + t \varphi_1(tA) b_1$, where $t=2$ and
  $b_0=b_1=[1,1,\ldots,1]^T$.  In this test, only the codes \phiv,
  \phip\ and \phipm\ are compared, with $\mathrm{Tol}=\sqrt{e}$.  
\end{itemize}
Only the third example has a known exact solution. To compute the
exact solutions for the other three examples we use \phiv, \phipm\ and
\expmv\, from \cite{almohy10cta}, with a small tolerance so that all
methods agree to a suitable level of accuracy. We report relative
errors computed at the using
\begin{equation*}
 \mathrm{error} =
 \left\|\frac{u_{\mathrm{exact}}-u_{\mathrm{approx}}}{u_{\mathrm{exact}}}
   \right\|, 
\end{equation*}
where $u_{\mathrm{exact}}$ and $u_{\mathrm{approx}}$ are the exact and
approximate solutions. When the exact solution has components which
are zero they are removed from the relative error calculations. In the
first three comparisons we measure the average speedup and error of
each of the codes relative to \expv; in the final comparison we
measure relative to \phiv. The \texttt{tic} and \texttt{toc} functions
from \matlab\ are used to compute the timings. We ran the comparisons
100 times to compute the average speedup.  We summarize our findings
in Table~\ref{tab:exp_exper}.

\begin{table}[ht!]
  \centering
  \caption{Comparisons of the average speedup of \phiv, \phip\ and
    \phipm\ relative to \expv\ and the relative errors on four 
    matrices taken from the University of Florida sparse matrix
    collection. \label{tab:exp_exper}}
  \begin{tabular}{|c|cc|cc|cc|cc|}
    \hline
    & \multicolumn{2}{c|}{\texttt{orani678}} 
    & \multicolumn{2}{c|}{\texttt{bcspwr10}} 
    & \multicolumn{2}{c|}{\texttt{gr\_30\_30}} 
    & \multicolumn{2}{c|}{\texttt{helm2d03}} \\ \hline
   code & speed & error &  speed & error & speed & error &
    speed & error \\ \hline 
    \expv\ & 1 & $3.1\times 10^{-9}$ & 1 & $5.8\times 10^{-14}$ 
    & 1 & $1.2\times 10^{-7}$ &  & \\
    \phiv\ & 0.96 & $1.6 \times 10^{-7}$ & 0.97 & $1.0\times 10^{-14}$
    & 0.94 & $2.1\times 10^{-7}$ & 1 & $4.3 \times 10^{-8}$ \\
    \phip\ & 0.95 & $3.5 \times 10^{-11}$ & 3.94 & $8.0\times 10^{-13}$
    & 2.69 & $1.8\times 10^{-6}$ & 2.59 & $1.9 \times 10^{-7}$\\
    \phipm\ & 1.35 & $2.4 \times 10^{-11}$ & 6.10 & $5.7\times 10^{-5}$ 
    & 3.59 & $3.9\times 10^{-6}$ & 4.63 & $1.9 \times 10^{-7}$ \\\hline
 \end{tabular}
\end{table}

\subsubsection*{Experiment 2}

In these computations we evaluate $\varphi_0(tA)b_0 + t
\varphi_1(tA)b_1 + \cdots + t^4\varphi_4(tA)b_4$, where
$b_0=\cdots=b_4=[1,1,\ldots,1]^T$, with the codes \phip\ and \phipm.
This comparison gauges the efficiency gains achieved by allowing the
Krylov subspace size to vary. The implementations are identical except
for the fact that \phipm\ can vary $m$ as well. We use the four sparse
matrices described above, with the same values of $t$ and Tol, except
for the sparse matrix \texttt{gr\_30\_30} with
$\mathrm{Tol}=\sqrt{e}$, is used and we only compute the forward part
of the problem. We summarize our findings in
Table~\ref{tab:phi_exper}.

\begin{table}[ht!]
  \centering
  \caption{Comparisons of the average speedup of \phipm\ relative to
    \phip\ and the relative errors on four large sparse matrices
    taken from the University of Florida sparse matrix collection. 
    \label{tab:phi_exper}}
  \begin{tabular}{|c|cc|cc|cc|cc|}
    \hline
    & \multicolumn{2}{c|}{\texttt{orani678}} 
    & \multicolumn{2}{c|}{\texttt{bcspwr10}} 
    & \multicolumn{2}{c|}{\texttt{gr\_30\_30}} 
    & \multicolumn{2}{c|}{\texttt{helm2d03}} \\ \hline
    code & speed & error &  speed & error &  speed & error &
    speed & error \\ \hline 
    \phip\ & 1 & $8.7\times 10^{-13}$ & 1 & $2.5\times 10^{-10}$  
    & 1 & $4.6\times 10^{-13}$ & 1 & $5.2\times 10^{-8}$ \\
    \phipm\ & 1.37 & $2.1\times 10^{-12}$ & 1.35 & $4.2\times 10^{-5}$ 
    & 1.16 & $6.0\times 10^{-13}$ & 1.87 & $5.2\times 10^{-8}$ \\ \hline
 \end{tabular}
\end{table}

\subsubsection*{Discussion of the results}

These comparisons show that in all cases the \phipm\ code is more
efficient, in some cases by a considerable margin.  Summarizing,
adapting both the dimension of Krylov subspace as well as the length
of the time steps significantly increases overall efficiency. Given
that in an implementation of an exponential integrator \phipm\ would
be called several times in a step over many steps during the
integration, this increase in efficiency can often lead to very large
overall computational gains.


\section{Conclusion and future work}
\label{sect:concl}

The \phipm\ function is an efficient routine which computes the action
of linear combinations of $\varphi$-functions on operand vectors. The
implementation combines time stepping with a procedure to adapt the
Krylov subspace size. It can be considered as an extension of the
codes provided in \expokit\ and \mathematica.

The $\varphi$-functions are the building blocks of exponential
integrators. An implementation of the algorithm in a lower-level
language will be useful in this context; this is work in progress. We
are also working on the implementation of exponential integrators
which use the \phipm\ routine described in this paper and hope to
report on this shortly.

We intend to improve the code over time. Some issues which we plan to
investigate have already been mentioned. One of them is the issue of
stability, especially in view of the error estimates in
Figure~\ref{fig:ex1error} which might point to stability problems. Our
choice to compute the $\varphi$-function of the reduced matrix~$H_m$
by adding some rows and columns and then computing the matrix
exponential may exacerbate any instabilities. Perhaps it is better to
compute the $\varphi$-function of~$H_m$ directly. This also allows us
to exploit the fact that~$H_m$ is symmetric if the matrix~$L$ in the
original differential equation is symmetric, for instance by using
rational Chebyshev approximants. In any case, regardless of whether we
augment the matrix~$H_m$ or not, we do not need to compute the matrix
function to full precision. It should also be possible to exploit the
fact that $H_m$ is Hessenberg.

We also intend to modify \phipm\ to take advantage of recent advances
in parallel processing technology, specifically the use of graphics
cards accessed using programming languages such as CUDA, which provide
promise of significant computational improvements. We are currently
investigating the application of Krylov-based methods to option
pricing problems, where the governing PDEs are often linear. The
corresponding discretized ODEs often take the form of
Equation~\eqref{eq:nonautonomous}, which can naturally be computed
using \phipm. Finally, as mentioned in the introduction, there are
alternatives to the (polynomial) Krylov method considered in this
paper. We plan to study other methods and compare them against the
method introduced here.  Competitive methods can be added to the code,
because we expect that the performance of the various methods depends
strongly on the characteristics of both the problem and the
exponential integrator.


\subsection*{Acknowledgements}
  The authors thank Nick Higham, Karel in 't Hout, Brynjulf Owren,
  Roger Sidje and the anonymous referee for helpful discussions.

\bibliographystyle{chicago}
\bibliography{../bibl/geom_int,../bibl/hamilt,../bibl/expintbib,../bibl/brynbib,../bibl/glm,../bibl/jitse}

\end{document}